\newtheorem{theorem}{Theorem}
\newtheorem{lemma}{Lemma}%
\newtheorem{remark}{Remark}%
\begin{document}

\title[Two abstract methods of lower and upper solutions with applications]{Two abstract methods of lower and upper solutions with applications}

\author*[1,2]{\fnm{Andrei} \sur{Stan}}\email{andrei.stan@ubbcluj.ro}

\affil*[1]{ \orgname{Department of Mathematics, Babeș-Bolyai
University}, \orgaddress{ \city{Cluj-Napoca}, \postcode{400084}, \country{Romania}}}

\affil[2]{ \orgname{Tiberiu Popoviciu
Institute of Numerical Analysis, Romanian Academy}, \orgaddress{ \city{Cluj-Napoca}, \postcode{400110}, \country{Romania}}}








\abstract{In this paper, we present two abstract methods for constructing a lower and an upper solution for a fixed point equation. The first method applies when the nonlinear operator is a composition of a linear and a nonlinear mapping, while the second method applies when the nonlinear operator satisfies an inequality of Harnack type. An application is provided for each method.
}

\keywords{Upper and lower solution, Harnack inequality, fixed point}



\maketitle

\section{Introduction and preliminaries}

The method of upper and lower solutions proves to be extremely useful for solving nonlinear equations, as it not only guarantees the existence of a solution but also provides a localization of the solution within an interval. The literature on this subject is extensive; we mention a few reference works on the topic~\cite{cabada,decoster,precup_lower_upper,1980,jorje}.

The structure of the paper is as follows. In Section~\ref{sectiunea principala}, we present two abstract methods for determining lower and upper solutions for an abstract equation, which are then used to guarantee the existence of a fixed point. The first method concerns an abstract Hammerstein equation (see \cite{precup_lower_upper} for another approach for the same equation), while the second method applies to a fixed point equation in the case where the nonlinear operator satisfies an abstract Harnack-type inequality. We note that the conditions are inspired by~\cite{precup_harnack}, although in that work the method is entirely different, being based on the fixed point index approach. Section~3 is devoted to illustrative applications, each demonstrating one of the two abstract methods.

We conclude this section with two auxiliary results.
The first result is a fundamental theorem from the theory of linear operators, which generalizes the classical Perron–Frobenius theorem for matrices (\cite{Krei Rutman 1948}, see also~\cite[p.~266]{schaefer_topological} or~\cite[Theorem~19.2]{deimling}).
\begin{theorem}[Krein-Rutman]\label{krein rutman}
Let $X$ be a Banach space, $K \subset X$ a total cone, and $F$ a linear compact operator with $F(K) \subset K$ and the spectral radius $r(T)$ strictly positive.  
Then, $r(T)$ is an eigenvalue of $F$ and the corresponding eigenvector lies in the cone $K$.
\end{theorem}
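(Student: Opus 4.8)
The plan is to combine the spectral theory of compact linear operators with the positivity of the resolvent, so I shall only indicate the main steps. Write $r:=r(F)$, so $r>0$ by hypothesis, and recall that the cone $K$ is closed. For every real $\lambda>r$ the Neumann series
\[
R(\lambda):=(\lambda I-F)^{-1}=\sum_{k\ge 0}\lambda^{-(k+1)}F^{k}
\]
converges in operator norm; since $F(K)\subseteq K$ and $K$ is a closed convex cone, every partial sum maps $K$ into $K$, hence $R(\lambda)(K)\subseteq K$, i.e.\ $R(\lambda)$ is a positive operator for each $\lambda>r$. The argument rests on three points: (i)~$r\in\sigma(F)$; (ii)~consequently $r$ is an eigenvalue of $F$ and a pole of $R$; (iii)~the leading Laurent coefficient of $R$ at $r$ produces an eigenvector lying in $K$.

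For (i)---where the positivity of $F$ is genuinely used---fix $x\in K$ and a functional $\phi$ in the dual cone $K^{*}=\{\phi\in X^{*}:\langle\phi,y\rangle\ge 0\ \text{for all}\ y\in K\}$, and consider
\[
g_{\phi,x}(\lambda)=\langle\phi,R(\lambda)x\rangle=\sum_{k\ge 0}\lambda^{-(k+1)}\langle\phi,F^{k}x\rangle .
\]
After the substitution $w=1/\lambda$ this is a power series in $w$ with nonnegative coefficients $\langle\phi,F^{k}x\rangle$, so by Pringsheim's theorem its radius of convergence is a singular point. If $r$ lay in the resolvent set, then $R(\lambda)$---hence each $g_{\phi,x}$---would continue analytically across $\lambda=r$; combining this with a uniform boundedness argument and with the totality of $K$ (which makes $K^{*}$ separate the points of $X$: if $y\in K$ and $\langle\phi,y\rangle=0$ for all $\phi\in K^{*}$, then $y\in K\cap(-K)=\{0\}$) one is led to conclude that the operator power series $\sum_{k}w^{k+1}F^{k}$ has radius of convergence strictly larger than $1/r$, contradicting Gelfand's formula $r=\lim_{k}\|F^{k}\|^{1/k}$. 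Hence $r\in\sigma(F)$.

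Point (ii) follows at once: $F$ is compact and $r\neq 0$, so $r$ is an isolated point of $\sigma(F)$, an eigenvalue of finite multiplicity, and a pole of $R$ of some finite order $q\ge 1$. For (iii), set $A:=\lim_{\lambda\downarrow r}(\lambda-r)^{q}R(\lambda)$, the leading Laurent coefficient, so that $A\neq 0$ (the pole having order exactly $q$); comparing the coefficients of $(\lambda-r)^{-q}$ in the identity $(\lambda I-F)R(\lambda)=I$ gives $(rI-F)A=0$, whence $A(X)\subseteq\ker(rI-F)$. On the other hand $(\lambda-r)^{q}R(\lambda)$ is positive for real $\lambda>r$, and since $K$ is closed its limit $A$ is positive as well, i.e.\ $A(K)\subseteq K$. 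Choosing $e\in K$ with $Ae\neq 0$---possible because $A\neq 0$ and $\overline{K-K}=X$---the vector $u:=Ae$ lies in $K\setminus\{0\}$ and satisfies $Fu=ru$, as required.

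I expect point (i) to be the main obstacle: a compact operator need not have its spectral radius in its spectrum (a quasinilpotent compact operator has spectral radius $0$, and the nonzero spectral values of a compact operator may be non-real), so the positivity of $F$ must be brought in, and the Pringsheim-plus-uniform-boundedness argument---in which the totality of $K$, through the richness of $K^{*}$, is indispensable---is the technical heart of the theorem; once (i) is available, (ii) and (iii) are comparatively routine, the closedness of $K$ being precisely what keeps the limiting operator $A$ positive.
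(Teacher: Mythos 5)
You should first note that the paper does not prove this statement at all: Theorem~\ref{krein rutman} is quoted as a classical auxiliary result with references to \cite{Krei Rutman 1948}, \cite{schaefer_topological} and \cite{deimling}, so your argument can only be measured against the standard literature proofs. Your outline (positivity of the resolvent for $\lambda>r$, then (i) $r\in\sigma(F)$ via Pringsheim, (ii) Riesz--Schauder to make $r$ a pole and eigenvalue, (iii) positivity of the leading Laurent coefficient to produce an eigenvector in $K$) is the classical route used for Banach lattices; steps (ii) and (iii) are correct as written, and your use of totality in (iii) (to find $e\in K$ with $Ae\neq 0$) is exactly right.

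The genuine gap is in step (i), precisely the step you defer to ``Pringsheim plus uniform boundedness plus totality.'' Two things are missing. First, Pringsheim gives, for each pair $(\phi,x)\in K^{*}\times K$, a radius of convergence strictly larger than $1/r$, but not a uniform one; this can be repaired (re-expand at a real point close to $1/r$ and use that the resolvent neighbourhood $B(r,\delta)$ is independent of $(\phi,x)$), but the argument must be made. Second, and more seriously, even with a uniform $w_{1}>1/r$ you only obtain $\sup_{k}w_{1}^{k}\langle\phi,F^{k}x\rangle<\infty$ for $\phi\in K^{*}$, $x\in K$, and the uniform boundedness principle does not upgrade this to $\sup_{k}w_{1}^{k}\|F^{k}\|<\infty$: weak boundedness against the subfamily $K^{*}$ (or against $K^{*}-K^{*}$, which in general is neither all of $X^{*}$ nor norming) does not imply norm boundedness, and elements of $X$ need not admit bounded decompositions in $K-K$ when $K$ is merely total (density of $K-K$ is all you have). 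Those transfer steps are available when the cone is normal and generating --- which is why this resolvent/Pringsheim proof works in Banach lattices --- but they are not available under the theorem's actual hypothesis of a total cone; the classical proofs in \cite{Krei Rutman 1948} and \cite{deimling} establish $r(F)\in\sigma(F)$ and the positive eigenvector by different means (approximate eigenvectors/fixed-point-index type arguments) precisely to avoid this duality obstruction. A small additional slip: it is not totality that makes $K^{*}$ separate points --- your bipolar argument uses closedness, convexity and pointedness of $K$ --- but this is harmless, since separation alone would not rescue the uniform boundedness step anyway. As it stands, the contradiction with Gelfand's formula is not reached, so the ``technical heart'' you identify remains open in your proposal.
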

We continue with a variant of Harnack's inequality (see, \cite{trudinger,precup_harnack}). \begin{theorem}\label{trudinger}
    Let $\Omega\subset \mathbb{R}^n$ be a  domain and $D\subset \Omega$ a compact set. Then, for each $p>1$ and $q\in \left[1,\frac{n(p-1)}{n-p}\right] $ there exists a constant $\mathcal{C}>0$ such that \begin{equation*}
        \inf_Du\geq \mathcal{C}\left ( \int_D u^q\right)^{\frac{1}{q}},
    \end{equation*}
    for all $u$ such that $u\geq 0$ and $-\Delta_pu\geq 0.$
\end{theorem}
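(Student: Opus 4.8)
The plan is to reduce this global statement to the \emph{local} weak Harnack inequality for nonnegative $p$-supersolutions on balls --- which is the genuine analytic input, and the only place where the restriction on $q$ is used --- and then to transfer it to the compact set $D$ by a finite covering together with a Harnack chain argument. Assume $D\neq\emptyset$ (otherwise there is nothing to prove), and identify $u$ with its lower semicontinuous representative, so that all infima below are meaningful.

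The local ingredient I would invoke is Trudinger's weak Harnack inequality in quasilinear form (see \cite{trudinger}): for the given $p$ and $q$ there is a constant $c=c(n,p,q)>0$, invariant under translations and dilations, such that
\[
\Big(\frac{1}{|B(y,2\rho)|}\int_{B(y,2\rho)} u^{q}\Big)^{1/q}\;\le\; c\,\inf_{B(y,\rho)} u
\]
for every ball with $\overline{B(y,4\rho)}\subset\Omega$ and every $u\ge 0$ satisfying $-\Delta_{p}u\ge 0$ in $\Omega$. The borderline exponent $\tfrac{n(p-1)}{n-p}$ is exactly the one up to which this can hold, as the fundamental solution $|x|^{-(n-p)/(p-1)}$ shows.

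To globalize, note first that since $D$ is compact and $\Omega$ is an open connected set, one can enclose $D$ in a \emph{connected} compact set $E$ with $D\subset E\subset\Omega$ (adjoin to $D$ finitely many arcs in $\Omega$ joining its points, possible because a domain is path connected), and then choose $\rho>0$ so small that $\overline{B(z,4\rho)}\subset\Omega$ for all $z\in E$. Cover $D$ by finitely many balls $B(x_{1},\rho),\dots,B(x_{N},\rho)$ with $x_{i}\in D$. Applying the local inequality on each $B(x_{i},\rho)$ and using that $t\mapsto t^{1/q}$ is subadditive (as $q\ge 1$),
\[
\Big(\int_{D}u^{q}\Big)^{1/q}\le\sum_{i=1}^{N}\Big(\int_{B(x_{i},\rho)}u^{q}\Big)^{1/q}\le c\,|B(0,2\rho)|^{1/q}\sum_{i=1}^{N}\inf_{B(x_{i},\rho)}u\le c\,N\,|B(0,2\rho)|^{1/q}\max_{1\le i\le N}\inf_{B(x_{i},\rho)}u .
\]

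It remains to dominate $\max_{i}\inf_{B(x_{i},\rho)}u$ by a fixed multiple of $\inf_{D}u$, uniformly in $u$; this is the step that must be done with care. For it I would use a one-sided Harnack chain: if $|x-y|\le\rho$ and $\overline{B(x,4\rho)}\subset\Omega$, then $B(y,\rho)\subset B(x,2\rho)$, so the local inequality at $x$ gives
\[
\inf_{B(x,\rho)}u\;\ge\;c^{-1}\Big(\frac{1}{|B(x,2\rho)|}\int_{B(x,2\rho)}u^{q}\Big)^{1/q}\;\ge\;\lambda\,\inf_{B(y,\rho)}u,\qquad \lambda:=c^{-1}2^{-n/q}>0,
\]
with $\lambda$ depending only on $n,p,q$. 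Joining any two centres $x_{i},x_{j}$ by a chain of at most $M$ consecutive overlapping $\rho$-balls whose centres lie in $E$ --- possible since $E$ is connected and compact, $M$ depending only on $E$ and $\rho$ --- and iterating this estimate yields $\inf_{B(x_{i},\rho)}u\ge\min(\lambda,1)^{M}\inf_{B(x_{j},\rho)}u$ for all $i,j$, hence
\[
\max_{i}\inf_{B(x_{i},\rho)}u\;\le\;\min(\lambda,1)^{-M}\min_{j}\inf_{B(x_{j},\rho)}u\;=\;\min(\lambda,1)^{-M}\inf_{\bigcup_{j}B(x_{j},\rho)}u\;\le\;\min(\lambda,1)^{-M}\inf_{D}u,
\]
the last step because $D\subset\bigcup_{j}B(x_{j},\rho)$. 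Combining this with the previous display proves the theorem with $\mathcal{C}=\big(c\,N\,|B(0,2\rho)|^{1/q}\min(\lambda,1)^{-M}\big)^{-1}$, a constant depending only on $n,p,q,D,\Omega$. The main obstacle is the local weak Harnack inequality itself --- a Moser-type iteration combined with a reverse-H\"older (John--Nirenberg) argument; everything after it is soft, the only subtleties being to keep every enlarged ball inside $\Omega$ and to run the chain through a connected compact neighbourhood of $D$ rather than through $D$, which need not itself be connected.
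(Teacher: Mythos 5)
The paper does not actually prove this statement: Theorem~\ref{trudinger} is quoted as known background, with the proof delegated to \cite{trudinger} (see also \cite{precup_harnack}), so there is no in-paper argument to compare yours with. Judged on its own, your reduction is sound: the subadditivity step for the finite cover, the normalization $|B(y,\rho)|/|B(x,2\rho)|=2^{-n}$ giving the one-sided chain estimate $\inf_{B(x,\rho)}u\ge c^{-1}2^{-n/q}\inf_{B(y,\rho)}u$ for $|x-y|\le\rho$, and the uniform bound $M$ on chain length obtained from compactness and connectedness of $E$ are all correct, and together they do transfer the local weak Harnack inequality to an arbitrary compact $D\subset\Omega$, with a constant depending only on $n,p,q,D,\Omega$.

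Two caveats. First, the analytic input you invoke is only available for $1\le q<\frac{n(p-1)}{n-p}$: Trudinger's local weak Harnack inequality fails at the borderline exponent, as your own test function shows after truncation --- $u_k=\min\bigl(|x|^{-(n-p)/(p-1)},k\bigr)$ is a genuine $W^{1,p}_{\mathrm{loc}}$ supersolution, $\inf_D u_k$ stays bounded while $\int_D u_k^{q}\to\infty$ when $q=\frac{n(p-1)}{n-p}$. Hence the closed interval in the statement as reproduced in the paper cannot be reached by your argument (or any argument), and your aside that the fundamental solution shows the borderline exponent is the one ``up to which this can hold'' is miscalibrated: it shows that this exponent must be excluded. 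This is a defect of the quoted statement rather than of your globalization, but you should state the local inequality with the strict range and note the discrepancy. Second, a small repair: if $D$ has infinitely many connected components, adjoining finitely many arcs joining points of $D$ need not produce a connected set; instead cover $D$ by finitely many closed balls contained in $\Omega$ and join their centres by arcs, which yields the connected compact $E$ you need, and the rest of your chain argument is unaffected.
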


\section{Main abstract results}\label{sectiunea principala}
Let $X$ be a Banach space with norm $|\cdot|$, ordered by a cone $K$.  
The induced order relation given by the cone $K$ is denote by $\leq$, that is, for $u,v \in X$ we write $u \leq v$ if and only if $v-u \in K$. Throughout this section, we always assume that the norm $|\cdot|$ is semi-monotone, i.e., there exists $\gamma>0$ such that \begin{equation}\label{norma semi monotona}
    0\leq u\leq v \quad \text{ implies }\quad |u|\leq \gamma|v|.
\end{equation}

Each of the following two subsections presents a different method for constructing lower and upper solutions for a fixed point equation.  
The first approach concerns  an abstract Hammerstein equation.

\subsection{Abstract Hammerstein equations}\label{prima sectiune}

We consider the equation  \begin{equation}\label{ecuatie abstracta}
    u=LF(u),
\end{equation}
where   $L \colon X \to X$ is a linear operator and $ F \colon K \to K$ is a (nonlinear) continuous mapping. In the subsequent we denote $$N:=LF.$$
Our first condition is related to the linear operator $L$.
\begin{description}
\item[(h1)] The cone $K$ is invariant under the linear operator $L$, i.e., $L(K)\subset K$. Moreover, $L$ admits a positive eigenvalue $\lambda_1 > 0$ with the corresponding eigenfunction $\varphi_1$ from the cone $K$ ($\varphi_1 \in K$).
\end{description}

From (h1) it follows that $L$ is increasing (order-preserving), that is, for all $u, v \in K$, one has
\[
  0 \leq u \leq v \quad \text{implies} \quad Lu \leq Lv.
\]
Indeed, if $0 \leq u \leq v$, then $v - u \in K$. Since $L(v - u) \in K$, i.e., $L(v - u) \geq 0$, it follows that $Lv - Lu \geq 0$, which proves the claim.

Under certain conditions on $ L$ and $F$, we show that  there exists a \textit{lower solution} $\underline{u}$ , i.e.,\begin{equation*}
    \underline{u}\leq N  \underline{u}.
\end{equation*}
and an \textit{upper solution}
\begin{equation*}
     \overline{u}\geq N \: \overline{u},
\end{equation*}
for the equation \eqref{ecuatie abstracta}. 

Before stating our assumptions, let \( \Phi \colon K \to \mathbb{R}_+ \) be a positively homogeneous mapping, that is,  
\[
    \Phi(\alpha u) = \alpha \Phi(u) \quad \text{for all } \alpha > 0, \; u \in K,
\] and
with the additional property that \( \Phi(u) = 0 \) if and only if \( u = 0 \). 
 \begin{remark}
The mapping \( \Phi \) can, for instance, to be a seminorm.
 \end{remark}
    
    The following conditions are assumed to be satisfied:
\begin{description}
\item[(h2)] The operator $F$ is increasing (order-preserving), that is, for $u,v \in K$ one has 
\begin{equation*}
  0\leq  u \leq v \quad \text{implies} \quad F(u) \leq F(v).
\end{equation*}
    \item[(h3)]\label{cond r} There exists a constant $r > 0$ such that
\[
    F(\lambda_1 u) \geq u \quad \text{for all } u \in K \text{ with } \Phi(u) = r .
\]
    \item[(h4)] There exists $\alpha>0$ and $\mu \in K \setminus \{0\}$  such that \begin{equation*}
        L\mu \leq\alpha \mu
    \end{equation*}
    and  \[
        F(\alpha\mu) \leq \mu .
    \]
\end{description}
Denote  \begin{equation}
\label{definire sub si supra solutii}
  \underline{u}=\frac{r}{\Phi(\varphi_1)}L(\varphi_1) \quad
  \text{ and }\quad \overline{u}=L\mu.
\end{equation}
We now show that $\underline{u}$ and $\overline{u}$ are a lower and an upper solution, respectively, for the equation~\eqref{ecuatie abstracta}.
\begin{theorem}\label{teorema existenta lower si upper solutie}
  Assume that  (h1)–(h4) hold true. Then, \(\underline{u}\) is a lower solution and \(\overline{u}\) is an upper solution for the equation~\eqref{ecuatie abstracta}.
\end{theorem}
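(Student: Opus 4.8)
The two claims are handled independently, and in each case the argument has the same shape: rewrite the candidate solution so that one of the hypotheses (h3) or (h4) applies directly to an appropriately scaled vector, and then propagate the resulting cone inequality through the linear operator $L$ using the monotonicity of $L$ recorded just after (h1) together with its linearity.

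\textbf{Lower solution.} Set $c:=r/\Phi(\varphi_1)$; this is well defined and strictly positive since $\varphi_1\in K\setminus\{0\}$ (being an eigenfunction) forces $\Phi(\varphi_1)>0$. Using the eigenvalue relation $L\varphi_1=\lambda_1\varphi_1$ from (h1), I would rewrite
\[
  \underline{u}=c\,L\varphi_1=c\lambda_1\varphi_1=\lambda_1\,(c\varphi_1).
\]
Since $c\varphi_1\in K$ and, by positive homogeneity of $\Phi$, $\Phi(c\varphi_1)=c\,\Phi(\varphi_1)=r$, hypothesis (h3) applies with $u=c\varphi_1$ and yields $F(\underline{u})=F(\lambda_1(c\varphi_1))\geq c\varphi_1$. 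Because $F$ maps $K$ into $K$, both sides lie in $K$, so applying the increasing linear operator $L$ and then using linearity gives $N\underline{u}=L F(\underline{u})\geq L(c\varphi_1)=c\,L\varphi_1=\underline{u}$, which is exactly the lower-solution inequality.

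\textbf{Upper solution.} Here $\overline{u}=L\mu\in K$ because $\mu\in K$ and $L(K)\subset K$. From the first part of (h4) we have $0\leq\overline{u}=L\mu\leq\alpha\mu$, so the monotonicity of $F$ in (h2) gives $F(\overline{u})\leq F(\alpha\mu)$, and the second part of (h4) gives $F(\alpha\mu)\leq\mu$; hence $F(\overline{u})\leq\mu$. Applying the increasing operator $L$ to this inequality yields $N\overline{u}=L F(\overline{u})\leq L\mu=\overline{u}$, the upper-solution inequality.

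\textbf{Expected difficulty.} There is no deep obstacle; the argument is a short chain of monotonicity and linearity manipulations. The only point requiring care is the normalization constant $c=r/\Phi(\varphi_1)$: one must use the positive homogeneity of $\Phi$ to check that $\Phi(c\varphi_1)=r$ so that (h3) is legitimately invoked, and one must keep track of the eigenvalue relation $L\varphi_1=\lambda_1\varphi_1$ so that $\underline{u}$ can be exhibited in the form $\lambda_1(c\varphi_1)$ demanded by (h3). Beyond that, every step is a direct appeal to a stated hypothesis.
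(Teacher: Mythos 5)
Your proof is correct and follows essentially the same route as the paper: apply (h3) to the normalized eigenfunction $\tfrac{r}{\Phi(\varphi_1)}\varphi_1$ (using the eigenvalue relation to identify $\lambda_1\cdot\tfrac{r}{\Phi(\varphi_1)}\varphi_1$ with $\underline{u}$) and then push the inequality through the increasing operator $L$, and for $\overline{u}$ chain (h4) with the monotonicity of $F$ and $L$. The only difference is cosmetic — you make the rewriting $\underline{u}=\lambda_1(c\varphi_1)$ and the positivity of $\Phi(\varphi_1)$ explicit, which the paper leaves implicit.
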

\begin{proof}
To show that $\underline{u}$ is a lower solution, first observe that since 
\[
   \Phi\left(  \frac{r}{\Phi(\varphi_1)}\varphi_1\right) =   \frac{r}{\Phi(\varphi_1)}\Phi\left(\varphi_1\right)=r,
\] 
and $\varphi_1\in K\setminus\{0\}$, condition (h3) implies
\[
      \frac{r}{\Phi(\varphi_1)}\varphi_1 
    \leq T\!\left(\frac{\lambda_1 r}{\Phi(\varphi_1)}\varphi_1 \right) 
    = T\!\left(\frac{r}{\Phi(\varphi_1)}L\!\left(\varphi_1\right)\right) 
    = F(\underline{u}).
\]
By the order-preserving property of $L$, it follows that
\[
   \underline{u} = L\!\left(\frac{r}{\Phi(\varphi_1)}\varphi_1\right) \leq LF(\underline{u})=N(\underline{u}),
\]
as desired.

Concerning the function $\overline{u},$  using the monotonicity property of $F$ from (h2), together with (h4), one obtains
\[
    \mu \geq F(\alpha \mu) \geq F(L\mu) = F(\overline{u}).
\]
Applying again the order-preserving property of $L$ to this relation yields
\[
    \overline{u} = L\mu \geq N(\overline{u}),
\]
 hence $\overline{u}$ is an upper solution.
\end{proof}
Further, assume that  
\begin{description}
    \item[(h5)] The lower and upper solutions $\underline{u}$ and $\overline{u}$ satisfy (are comparable)
    \[
        \underline{u} \leq \overline{u}.
    \]
\end{description}
Then the following invariance result holds.
\begin{theorem}\label{teorema invarianta}
Under conditions (h1)-(h5),  the interval $[\underline{u},\overline{u}]$ is invariant under the operator $N$, that is,
\begin{equation*}
   N\big( [\underline{u},\overline{u}] \big)\;\subset\; [\underline{u},\overline{u}].
\end{equation*}
\end{theorem}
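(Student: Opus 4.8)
The plan is to argue by a direct sandwiching (squeeze) argument, exploiting that $N=LF$ is order-preserving as a composition of two order-preserving maps. First I would fix an arbitrary $w\in[\underline{u},\overline{u}]$, so that $\underline{u}\le w\le\overline{u}$, and check that $w$ indeed belongs to the domain of $F$: since $\underline{u}=\frac{r}{\Phi(\varphi_1)}L(\varphi_1)$ with $\varphi_1\in K$ and $L(K)\subset K$ by (h1), we have $\underline{u}\in K$; hence $w-\underline{u}\in K$ and $\underline{u}\in K$ give $w=(w-\underline{u})+\underline{u}\in K$, so $F(w)$ is well defined.

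Next I would apply the monotonicity of $F$ from (h2) to the chain $\underline{u}\le w\le\overline{u}$, obtaining $F(\underline{u})\le F(w)\le F(\overline{u})$, and then apply the order-preserving property of $L$ (which the excerpt already derives from (h1)) to get
\[
    N(\underline{u})=LF(\underline{u})\;\le\;LF(w)=N(w)\;\le\;LF(\overline{u})=N(\overline{u}).
\]
Finally I would invoke Theorem~\ref{teorema existenta lower si upper solutie}, which gives $\underline{u}\le N(\underline{u})$ and $N(\overline{u})\le\overline{u}$, and concatenate:
\[
    \underline{u}\;\le\;N(\underline{u})\;\le\;N(w)\;\le\;N(\overline{u})\;\le\;\overline{u},
\]
so that $N(w)\in[\underline{u},\overline{u}]$. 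Since $w$ was arbitrary, this establishes $N\big([\underline{u},\overline{u}]\big)\subset[\underline{u},\overline{u}]$.

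I do not expect a genuine obstacle here: the argument is essentially a formal consequence of (h1), (h2), and the already-proven fact that $\underline{u}$ and $\overline{u}$ are lower and upper solutions; condition (h5) is used only to ensure the interval $[\underline{u},\overline{u}]$ is nonempty so that the statement is non-vacuous. The one small point worth stating explicitly is the verification that every $w$ in the interval lies in $K$, so that $F(w)$ and hence $N(w)$ make sense; everything else is a straightforward chaining of monotonicity inequalities.
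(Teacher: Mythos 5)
Your proposal is correct and follows essentially the same argument as the paper: apply the monotonicity of $F$ and $L$ to the chain $\underline{u}\le w\le\overline{u}$ and then use the lower/upper solution inequalities from Theorem~\ref{teorema existenta lower si upper solutie} to conclude $\underline{u}\le N(w)\le\overline{u}$. Your extra verification that $w\in K$ (so that $F(w)$ is defined) is a small point the paper leaves implicit, but it does not change the nature of the proof.
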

\begin{proof}
Let $u \in [\underline{u}, \overline{u}]$. By the monotonicity of $L$ and $F$, we have
\[
    \underline{u} \leq u \leq \overline{u} 
    \quad \text{ implies } \quad 
   N(\underline{u})=  LF(\underline{u}) \leq LF(u) \leq LF(\overline{u})=N(\overline{u}).
\]
Since $\underline{u}\leq N(\underline{u})$ and $\overline{u}\geq N(\overline{u})$, it follows that
\[
    \underline{u} \leq Nu \leq \overline{u},
\]
which completes our proof.
\end{proof}
\begin{remark}\label{remarca verificare h4}
    One easily sees that condition (h5) holds if \begin{equation*}
        \frac{r}{\Phi(\varphi_1)}\varphi_1\leq \mu,
    \end{equation*}
    which proves to be useful in applications, as we are about to see in the next section.
\end{remark}
Under an additional compactness condition on $L$, we obtain the following fixed-point result.
\begin{theorem}\label{teorema existenta solutie}
Assume that conditions (h1)-(h5) are satisfied. If, in addition, the operator $L$ is completely continuous, then 
there exists $u^\ast \in [\underline{u},\overline{u}]$ such that 
\[
    u^\ast = N (u^\ast).
\]
\end{theorem}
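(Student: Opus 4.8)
The plan is to apply a standard fixed-point theorem for completely continuous operators on a closed, convex, bounded set — namely Schauder's fixed point theorem — to the restriction of $N$ to the order interval $[\underline{u},\overline{u}]$. First I would observe that, by Theorem~\ref{teorema invarianta}, the set $C:=[\underline{u},\overline{u}]$ is invariant under $N$, so $N$ maps $C$ into itself. Next I would check that $C$ is a nonempty (it contains $\underline u$ by (h5)), closed, and convex subset of $X$: closedness follows because $K$ is closed and $[\underline u,\overline u]=(\underline u+K)\cap(\overline u-K)$, and convexity is immediate from the definition of the order interval and the fact that $K$ is a convex cone. The one genuinely delicate point is boundedness of $C$: this is where the semi-monotonicity assumption~\eqref{norma semi monotona} enters. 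For any $u\in C$ we have $0\le u-\underline u\le \overline u-\underline u$, hence $|u-\underline u|\le\gamma|\overline u-\underline u|$, and therefore $|u|\le|\underline u|+\gamma|\overline u-\underline u|=:R<\infty$; so $C$ is bounded.

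It remains to verify that $N=LF$ is completely continuous on $C$. The mapping $F\colon K\to K$ is continuous by hypothesis, and since $C$ is a bounded subset of $K$, $F(C)$ is a bounded set; then $L$, being completely continuous, maps the bounded set $F(C)$ into a relatively compact set, so $N(C)$ is relatively compact. Continuity of $N$ on $C$ follows from the continuity of $F$ together with the (linear, hence continuous) operator $L$. Thus $N$ is a completely continuous self-map of the nonempty, closed, bounded, convex set $C$, and Schauder's fixed point theorem yields $u^\ast\in C=[\underline u,\overline u]$ with $u^\ast=N(u^\ast)$, as claimed.

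The main obstacle — and it is a mild one — is making sure the order interval is a legitimate domain for Schauder's theorem, i.e., that it is closed, convex, and \emph{bounded in norm}; boundedness is not automatic in an ordered Banach space and is precisely what the semi-monotone norm condition~\eqref{norma semi monotona} is there to supply. Everything else (invariance, complete continuity of the composition) is either already established in Theorem~\ref{teorema invarianta} or follows directly from the standing hypotheses on $L$ and $F$. One could alternatively invoke the Schauder–Tychonoff theorem or a version of the fixed point theorem stated directly for completely continuous maps on closed bounded convex sets, but Schauder's classical statement suffices here.
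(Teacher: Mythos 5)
Your overall strategy coincides with the paper's: verify that the order interval $C=[\underline u,\overline u]$ is nonempty, closed, convex and bounded (the last via the semi-monotone norm), invoke Theorem~\ref{teorema invarianta} for invariance, show $N(C)$ is relatively compact, and conclude with Schauder's fixed point theorem. Your treatment of closedness, convexity and boundedness is fine; your bound $|u|\le|\underline u|+\gamma|\overline u-\underline u|$ works just as well as the paper's $|u|\le\gamma|\overline u|$, which uses $0\le\underline u\le u\le\overline u$ (note $\underline u\in K$).

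The one step that does not hold as written is the claim that ``since $C$ is a bounded subset of $K$, $F(C)$ is a bounded set,'' justified only by the continuity of $F$. In an infinite-dimensional Banach space a continuous nonlinear map need not carry bounded sets to bounded sets: continuity gives only local boundedness, and $C$ is not compact, so no uniform bound follows. Without boundedness of $F(C)$ you cannot use the complete continuity of $L$ to conclude that $N(C)=L\bigl(F(C)\bigr)$ is relatively compact, which is exactly what Schauder's theorem requires here. The paper closes this point using the order structure rather than continuity: since $F$ is increasing (h2) and maps $K$ into $K$, every $u\in C$ satisfies $0\le F(u)\le F(\overline u)$, so semi-monotonicity of the norm yields $|F(u)|\le\gamma\,|F(\overline u)|$; hence $F(C)$ is bounded and the compactness of $L$ gives the relative compactness of $N(C)$. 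With this correction your argument is complete and identical in substance to the paper's proof.
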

\begin{proof}
Clearly, the set $[\underline{u}, \overline{u}]$ is convex. Since the cone $K$ is closed, it follows that $[\underline{u}, \overline{u}]$ is also closed. Moreover, as the norm $|\cdot|$ is semi-monotone, for all $u \in [\underline{u}, \overline{u}]$ one has
\[
    |u| \leq \gamma |\overline{u}|,
\]
and hence the set $[\underline{u}, \overline{u}]$ is bounded.

The complete continuity of \( L \), together with the continuity and monotonicity of \( F \), implies that the operator \( N \) is continuous and maps the set \( [\underline{u}, \overline{u}] \) into a relatively compact set. 
Indeed, since \( F \) is increasing and the norm is semi-monotone, one obtains that \( F([\underline{u}, \overline{u}]) \) is bounded, more exactly, $|F(u)| \leq \gamma\, |F(\overline{u})|$ for all $u\in [\underline{u}, \overline{u}]$. Thus, by the compactness of \( L \), it follows that \( N([\underline{u}, \overline{u}]) \) is relatively compact, which establishes the desired property.

Finally, by Theorem~\ref{teorema invarianta}, the interval \( [\underline{u}, \overline{u}] \) is invariant under the operator \( N \). Hence, Schauder’s fixed point theorem applies and ensures the existence of a fixed point \( u^\ast \in [\underline{u}, \overline{u}] \) for \( N \), which finishes our proof.

\end{proof}

\begin{remark}
From the proof of Theorem~\ref{teorema existenta lower si upper solutie},  
we observe that instead of $\varphi_1$ and $\lambda_1$, one may take any nonzero element  
$\varphi \in K\setminus\{0\}$ and any $\lambda > 0$ such that  
\[
    L(\varphi) \geq \lambda \varphi.
\]
In this case, $\lambda_1$ in (h3) should be replaced by $\lambda$, and $\underline{u}$ to be defined by
\begin{equation}\label{underline u mai general}
    \underline{u} = \frac{r}{\Phi(\varphi)}\, L(\varphi).
\end{equation}
Indeed, since
\[
    \frac{r}{\Phi(\varphi)}\varphi 
    \leq T\!\left(\frac{\lambda r}{\Phi(\varphi)}\varphi\right) 
    \leq T\!\left(\frac{r}{\Phi(\varphi)}L(\varphi)\right) 
    = F(\underline{u}),
\]
the order-preserving property of $L$ implies that $\underline{u}$ given in \eqref{underline u mai general} is a lower solution for the equation \eqref{ecuatie abstracta}.

\end{remark}
If, instead of conditions (h3) and (h4), we consider
\begin{description}
    \item[(h3)'] There exists $R > 0$ such that
    \[
        F(\lambda_1 u) \leq u \quad \text{for all } u \in K \text{ with } \Phi(u) = R,
    \]

    \item[(h4)'] There exist $\alpha > 0$ and $\mu \in K \setminus \{0\}$ such that
    \[
        L\mu \geq \alpha \mu
    \]
    and
    \[
        F(\alpha \mu) \geq \mu,
    \]
\end{description}
an analogue of Theorem \ref{teorema existenta lower si upper solutie} can be established.
\begin{theorem}
Assume that conditions (h1), (h2), (h3)' and (h4)' are satisfied. Then
\[
    \underline{u} = L\mu 
    \quad \text{and} \quad 
    \overline{u} = \frac{ R}{\Phi(\varphi_1)}\,L(\varphi_1),
\]
are a lower and an upper solution, respectively, for the equation~\eqref{ecuatie abstracta}.
\end{theorem}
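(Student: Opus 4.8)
The plan is to mirror the proof of Theorem~\ref{teorema existenta lower si upper solutie}, interchanging the roles of the two building blocks and reversing every relevant inequality. As a preliminary remark I would note, exactly as before, that (h1) gives both the cone invariance $L(K)\subset K$ — which ensures that $L\mu$, $\alpha\mu$, $\varphi_1$, $L(\varphi_1)$ and their positive scalar multiples all lie in $K$, so that $F$ and hence $N=LF$ may legitimately be applied to them — and the fact that $L$ is order-preserving on $K$. These two facts are used repeatedly.

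First I would treat $\overline{u}=\frac{R}{\Phi(\varphi_1)}L(\varphi_1)$. Since $\varphi_1\in K\setminus\{0\}$ we have $\Phi(\varphi_1)>0$, and since $R>0$ positive homogeneity of $\Phi$ gives
\[
  \Phi\!\left(\frac{R}{\Phi(\varphi_1)}\varphi_1\right)=\frac{R}{\Phi(\varphi_1)}\Phi(\varphi_1)=R ,
\]
so condition (h3)$'$ applies with $u=\frac{R}{\Phi(\varphi_1)}\varphi_1$. Using $L\varphi_1=\lambda_1\varphi_1$ from (h1), this yields
\[
  F(\overline{u})=F\!\left(\lambda_1\,\frac{R}{\Phi(\varphi_1)}\varphi_1\right)\leq \frac{R}{\Phi(\varphi_1)}\varphi_1 .
\]
Applying the order-preserving operator $L$ to both sides gives $N(\overline{u})=LF(\overline{u})\leq \frac{R}{\Phi(\varphi_1)}L(\varphi_1)=\overline{u}$, so $\overline{u}$ is an upper solution.

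Next, for $\underline{u}=L\mu$ I would invoke (h4)$'$ together with the monotonicity (h2) of $F$: from $L\mu\geq\alpha\mu$ and $F$ increasing we get $F(L\mu)\geq F(\alpha\mu)\geq\mu$, i.e. $F(\underline{u})\geq\mu$; applying $L$ once more yields $N(\underline{u})=LF(\underline{u})\geq L\mu=\underline{u}$, so $\underline{u}$ is a lower solution. I do not expect a genuine obstacle, since this is the exact symmetric counterpart of the argument already carried out; the only points requiring care are bookkeeping ones — reversing each inequality consistently, feeding the correct quantity into the monotone map $F$ (it is $L\mu\geq\alpha\mu$, not the reverse, that is used inside $F$), and keeping track of the fact that $\Phi(\varphi_1)\neq 0$ so that the normalizing constant is well defined and positive.
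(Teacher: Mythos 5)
Your proposal is correct and follows essentially the same route as the paper: for $\underline{u}=L\mu$ you chain $\mu\le F(\alpha\mu)\le F(L\mu)$ via (h4)$'$ and (h2) and then apply the order-preserving $L$, and for $\overline{u}$ you apply (h3)$'$ at $\frac{R}{\Phi(\varphi_1)}\varphi_1$ together with $L\varphi_1=\lambda_1\varphi_1$ and then $L$, exactly as in the paper's proof. No gaps.
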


\begin{proof} 
Under similar reasoning as in the proof of Theorem \ref{teorema existenta lower si upper solutie}, we obtain
\begin{equation*}
    \underline{u} = L\mu \leq LF(\alpha \mu) \leq LF\left(L\mu \right) = N(\underline{u}),
\end{equation*}
and
\begin{equation*}
    \overline{u} 
    = L\left(\frac{R}{|\varphi_1|}  \varphi_1\right) 
    \geq LF\!\left(\frac{\lambda_1 R}{\Phi(\varphi_1)}\varphi_1\right) 
    =LF\left( \frac{R}{\Phi(\varphi_1)}L(\varphi_1)\right) =N(\overline{u}).
\end{equation*}
\end{proof}
\begin{remark}
We emphasize that, under the same assumptions as in Theorem \ref{teorema existenta solutie}, the method of monotone iterations can be applied to obtain two extremal (not necessarily distinct) fixed points for the operator $N$. That is, there exist 
\(\underline{u}^\ast, \overline{u}^\ast \in [\underline{u}, \overline{u}]\) such that 
\(N(\underline{u}^\ast) = \underline{u}^\ast\) and \(N(\overline{u}^\ast) = \overline{u}^\ast\), 
and every other fixed point of \(N\) from $[\underline{u},\overline{u}]$ lies in the interval 
\([\underline{u}^\ast, \overline{u}^\ast]\). 
We refer the reader to~\cite{decoster} or~\cite{precup_carte} for further details on the monotone iterative method.

\end{remark}
In the next section, we present another method for constructing a lower and an upper solution for a fixed point equation, applicable when the nonlinear operator satisfies an abstract Harnack inequality.
\subsection{Fixed point equations via abstract Harnack inequality}

We consider the fixed point problem
\begin{equation}\label{problema punct fix}
    u = N(u),
\end{equation}
where $N \colon X \to X$ is a (nonlinear)  increasing operator, i.e., for $u,v \in K$ one has
\[
  0 \leq u \leq v \quad \text{implies} \quad N(u) \leq N(v).
\]
On $X$, we consider a seminorm $\|\cdot\|$, which is assumed to be increasing, i.e., for $u,v \in K$ one has
\[
    0 \leq u \leq v \quad \text{implies} \quad \|u\| \leq \|v\|.
\]
Moreover, we assume that there exists $\psi \in K\setminus\{0\}$ such that
\begin{equation}\label{inegalitate superioara psi}
     u \leq |u|\,\psi \quad \text{for all } \,\,u \in K.
\end{equation}
The following condition plays a key role in the subsequent analysis and can be regarded as a weak type \textit{Harnack inequality} \cite{precup_harnack}.
\begin{description}
    \item[(a1)] There exists $\chi \in K \setminus \{0\}$ such that
\begin{equation}\label{inegalitate Harnack}
    N(u) \geq \|N(u)\|\chi \quad \text{for all } u \in K.
\end{equation}
\end{description}
The next  two additional conditions are required.
\begin{description}
    \item[(a2)] There exists $r > 0$ such that
    \[
        \|N(r\chi)\| \geq r.
    \]

    \item[(a3)] There exists $R > 0$ such that
    \[
        |N(R\psi)| \leq R.
    \]
\end{description}

Now, we are ready to present the main result of this subsection.
\begin{theorem}\label{teorema principala a doua sectiune}
Assume that conditions (a1)-(a3) are satisfied. Then
\[
    \underline{u} =r\chi 
    \quad \text{and} \quad 
    \overline{u} = R \psi,
\]
represent a lower and an upper solution, respectively, for the equation \eqref{problema punct fix}. Moreover, if\begin{equation}\label{u lowerbar < u upperbar}
    \underline{u}\leq \overline{u},
\end{equation} and the operator $N$ maps the interval $[\underline{u},\overline{u}]$ into a relatively compact set,   then $N$ admits a fixed point in the interval $[\underline{u}, \overline{u}]$, i.e., there exists $u^\ast \in K$ such that
\begin{equation}\label{fixed point}
     \underline{u} \leq u^\ast \leq \overline{u}
    \quad \text{and} \quad 
    N(u^\ast) = u^\ast.
\end{equation}
\end{theorem}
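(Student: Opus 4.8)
The plan is to verify the two one-sided inequalities directly from (a1)--(a3) and then close the argument with Schauder's fixed point theorem on the order interval $[\underline{u},\overline{u}]$. For the lower solution I would apply the abstract Harnack inequality \eqref{inegalitate Harnack} at the point $u=r\chi$, which gives $N(r\chi)\geq\|N(r\chi)\|\chi$; since (a2) gives $\|N(r\chi)\|\geq r$ and $\chi\in K$, the scalar inequality $\|N(r\chi)\|\geq r$ promotes to the order inequality $\|N(r\chi)\|\chi\geq r\chi$, and chaining these yields $N(\underline{u})=N(r\chi)\geq r\chi=\underline{u}$, so $\underline{u}$ is a lower solution. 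For the upper solution I would first note that (a1) forces $N(K)\subset K$ --- indeed $N(u)\geq\|N(u)\|\chi\geq 0$ for every $u\in K$ --- so that \eqref{inegalitate superioara psi} may be applied to $N(R\psi)$, giving $N(R\psi)\leq|N(R\psi)|\psi$; combining with (a3), namely $|N(R\psi)|\leq R$, and with $\psi\in K$ once more gives $N(\overline{u})=N(R\psi)\leq R\psi=\overline{u}$, so $\overline{u}$ is an upper solution.

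For the fixed point assertion, assuming $\underline{u}\leq\overline{u}$, I would establish invariance $N([\underline{u},\overline{u}])\subset[\underline{u},\overline{u}]$ exactly as in Theorem~\ref{teorema invarianta}: for $\underline{u}\leq u\leq\overline{u}$, monotonicity of $N$ together with the lower/upper solution inequalities yields $\underline{u}\leq N(\underline{u})\leq N(u)\leq N(\overline{u})\leq\overline{u}$. The set $[\underline{u},\overline{u}]$ is convex, closed (because $K$ is closed), and bounded (because $|\cdot|$ is semi-monotone, so $|u|\leq|\underline{u}|+\gamma|\overline{u}-\underline{u}|$ for $u$ in the interval); since $N$ is continuous and maps this set into a relatively compact set by hypothesis, Schauder's fixed point theorem produces $u^\ast\in[\underline{u},\overline{u}]$ with $N(u^\ast)=u^\ast$, which is precisely \eqref{fixed point}.

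I do not expect a genuine obstacle here: the statement essentially unpacks (a1)--(a3). The one step that is not explicitly among the hypotheses is the inclusion $N(K)\subset K$, which has to be read off from (a1) before \eqref{inegalitate superioara psi} can be invoked for the upper solution. The only other point to keep in mind is that continuity of $N$ is needed for Schauder's theorem, so it should be regarded as part of the standing assumptions on the operator $N$. Beyond that, the argument repeatedly relies only on the elementary fact that a cone element multiplied by a nonnegative scalar remains in the cone, which is what turns the numerical inequalities in (a2) and (a3) into order relations in $X$.
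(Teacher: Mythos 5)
Your proof is correct and follows essentially the same route as the paper's: the Harnack inequality (a1) with (a2) gives the lower-solution inequality, inequality \eqref{inegalitate superioara psi} applied to $N(R\psi)$ with (a3) gives the upper-solution inequality, and monotonicity plus Schauder's theorem on the convex, closed, bounded invariant interval $[\underline{u},\overline{u}]$ yields the fixed point. Your explicit remarks that (a1) guarantees $N(K)\subset K$ (needed before invoking \eqref{inegalitate superioara psi}) and that continuity of $N$ must be part of the standing assumptions are minor clarifications of steps the paper leaves implicit.
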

\begin{proof}
Note that, since the seminorm \( \|\cdot\| \) is increasing, and using the Harnack inequality~\eqref{inegalitate Harnack} together with condition~(a2), we obtain
\begin{equation}\label{u underline sub solutie}
    N(\underline{u}) = N(r\chi) \ge \|N(r\chi)\|\chi \ge r\chi = \underline{u},
\end{equation}
so \( \underline{u} \) is a lower solution of problem~\eqref{problema punct fix}. Moreover, for any \( u \ge \underline{u} \), by the monotonicity of \( N \) and relation~\eqref{u underline sub solutie}, we have
\begin{equation}\label{sub solutie}
    N(u) \ge N(\underline{u}) \ge \underline{u}.
\end{equation}
Taking $u=N(R\psi)$ in \eqref{inegalitate superioara psi}, we have  \begin{equation*}
    N(\overline{u})=N(R\psi)\leq |N(R\psi)|\psi=|N(\overline{u})|\psi.
\end{equation*}
Thus, by (a3), one obtains \begin{equation*}
 N(\overline{u})\leq |N(\overline{u})|\psi=|N(R\psi)|\psi\leq R\psi=\overline{u},
\end{equation*}
whence $\overline{u}$ is an upper solution for the problem \eqref{problema punct fix}.
For any $u\leq \overline{u}$, 
by the monotonicity of $N$, we further obtain that \begin{equation}\label{supra solutie}
    N(u)\leq N(\overline{u})\leq \overline{u}.
\end{equation}
Assume now that relation~\eqref{u lowerbar < u upperbar} holds. Then,  from~\eqref{sub solutie} and~\eqref{supra solutie} it follows immediately that  
\[
    N\big([\underline{u}, \overline{u}]\big) \subset [\underline{u}, \overline{u}].
\]
Moreover, the set \( [\underline{u}, \overline{u}] \) is convex, closed, and bounded (the boundedness follows from the semi-monotonicity of the norm). Now, if $N$ maps $[\underline{u}, \overline{u}]$ into a relatively compact set,  then the Schauder’s fixed point theorem applies and guarantees that there exists a fixed point for \( N \) in the interval \( [\underline{u}, \overline{u}] \), that is, relation~\eqref{fixed point} holds.
\end{proof}
\begin{remark}
By the monotonicity of the seminorm \( \|\cdot\| \) and the Harnack inequality~\eqref{inegalitate Harnack}, we have \( \|\chi\| \le 1 \). Consequently, compared to condition~(2.18) in~\cite{precup_harnack}, assumption~(a2) represents a weaker requirement. Indeed, if condition~(2.18) in~\cite{precup_harnack} holds, that is,
\[
    \|N(r \chi)\| \ge \frac{r}{\|\chi\|},
\]
then
\[
    \|N(r \chi)\| \ge \frac{r}{\|\chi\|} \ge r,
\]
and hence assumption~(a2) is also satisfied.

\end{remark}

\section{Applications}
In this section, we present one application for each of the two abstract results from Section~\ref{sectiunea principala}.
\subsection{Positive solution of the classical Hammerstein equation}
In this subsection, we consider the fixed point problem
\begin{equation}\label{ec integrala Hammerstein}
     u(t) = \int_0^1 k(t,s) f(u(s)) \, ds,
\end{equation}
for which we show how Theorem~\ref{teorema existenta solutie} can be applied. Here, $k$ is a symmetric ($k(t,s)=k(s,t)$ for all $t,s\in [0,1]$), nonnegative continuous function on $[0,1]^2$, and $f \in C(\mathbb{R}, \mathbb{R}_+)$ is nondecreasing on $\mathbb{R}_+$.  

Let $X = C[0,1]$ be endowed with the supremum norm $|\cdot|_\infty$, and let $K$ denote the cone of continuous nonnegative functions.  
Also, let $L\colon X\to X$ be the linear Hammerstein operator
\[
    (Lu)(t) = \int_0^1 k(t,s) u(s) \, ds, \quad u \in C[0,1],
\]
and $F \colon K \to K$ the Nemytskii operator, which assigns to each $u \in K$ the function $Fu \colon [0,1] \to \mathbb{R}_+$ given by
\[
   F(u)(s) = f(u(s)), \quad s \in [0,1].
\]
Note that $F$ is well defined since $f$ is continuous and nonnegative on $\mathbb{R}$.

The complete continuity of $L$ follows from standard arguments based on the Arzelà–Ascoli theorem (see, e.g., \cite{integration theory for hammerstein operators, liu2009, cheng2019, lou1997}). To guarantee that $L$ has an eigenvalue, we need the following additional condition on the kernel $k$ (see \cite{cheng2019}). 
\begin{description}
    \item[(H1)] The function $k$ satisfies a Green like inequality, that is, there exists a continuous function $\theta\colon[0,1]\to \mathbb{R}_+$ such that \begin{equation*}
        k(t,s)\geq \theta(t)k(q,s)\quad \text{for all}\quad t,s,q\in [0,1],
    \end{equation*}
    and  \begin{equation*}
      |L\theta|_\infty>0.
    \end{equation*}
\end{description}
The following result is of great importance for the subsequent analysis.
\begin{lemma}\label{lema existena valori proprii hammerstein}
If condition (H1) is satisfied, then the linear Hammerstein operator $L$ has a positive eigenvalue $\lambda_1>0$, and moreover, the corresponding eigenfunction $\varphi_1$ is positive, i.e., $\varphi_1\in K$.
\end{lemma}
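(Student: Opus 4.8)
The plan is to obtain the conclusion as a direct application of the Krein–Rutman theorem (Theorem~\ref{krein rutman}) to the operator $L$. First I would record the easy hypotheses: the ambient space $X = C[0,1]$ is a Banach space; the cone $K$ of nonnegative continuous functions is total (indeed $K - K = X$, since every $u \in C[0,1]$ decomposes as $u^+ - u^-$); the operator $L$ is linear and completely continuous, as already noted via the Arzelà–Ascoli theorem; and $L(K) \subset K$, which is immediate from $k \ge 0$. With these in hand, the entire lemma reduces to verifying the one remaining hypothesis of Theorem~\ref{krein rutman}, namely that the spectral radius $r(L)$ is strictly positive.

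This is precisely where condition (H1) enters, and it is the only non-routine step. The idea is to integrate the Green-like inequality. Fixing any $w \in K$ and integrating $k(t,s) \ge \theta(t) k(q,s)$ in the variable $s$ against $w$ yields
\[
   (Lw)(t) \ge \theta(t)\,(Lw)(q) \qquad \text{for all } t, q \in [0,1].
\]
Taking $w = \theta$ — which is not the zero function, since otherwise $|L\theta|_\infty = 0$, contradicting (H1) — and choosing $q_0 \in [0,1]$ with $c := (L\theta)(q_0) > 0$ (possible because $|L\theta|_\infty > 0$), one obtains
\[
    L\theta \ge c\,\theta, \qquad \theta \in K \setminus \{0\}, \quad c > 0.
\]

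From here the positivity of the spectral radius follows by iteration. Since $L$ is order-preserving on $K$, applying $L$ repeatedly gives $L^n\theta \ge c^n \theta$ for every $n \in \mathbb{N}$; and since the supremum norm is monotone on $K$, this yields $|L^n\theta|_\infty \ge c^n |\theta|_\infty > 0$, hence $\|L^n\| \ge c^n$ and, by Gelfand's formula, $r(L) = \lim_{n\to\infty} \|L^n\|^{1/n} \ge c > 0$. Theorem~\ref{krein rutman} then applies and ensures that $\lambda_1 := r(L) > 0$ is an eigenvalue of $L$ with a corresponding eigenfunction $\varphi_1 \in K$, which is the assertion of the lemma. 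The main obstacle is exactly the extraction of the inequality $L\theta \ge c\,\theta$ from (H1) (together with noting that the supremum norm is monotone so that iteration detects the growth); everything else is routine verification of the Krein–Rutman hypotheses.
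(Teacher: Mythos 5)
Your proposal is correct and follows essentially the same route as the paper: both verify the Krein--Rutman hypotheses (total cone, complete continuity, invariance of $K$) and reduce everything to showing $r(L)>0$ via Gelfand's formula, by integrating the Green-like inequality of (H1) against $\theta$ to get $L\theta \ge c\,\theta$ (the paper takes $c = |L\theta|_\infty$) and then iterating using the order-preservation of $L$ and the monotonicity of the sup norm on $K$. The only cosmetic difference is that the paper writes the iterate pointwise as $(L^k\theta)(t)\ge \theta(t)|L\theta|_\infty^k$ rather than in the order form $L^n\theta \ge c^n\theta$, which is the same estimate.
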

\begin{proof}
The proof relies on the Krein–Rutman theorem (Theorem~\ref{krein rutman}).  
If one can show that the spectral radius $r(L)$ of $L$ is strictly positive, then, since $L$ is completely continuous and the cone $K$ is reproducing (see \cite[Chapter~19]{deimling}), and hence total, Theorem~\ref{krein rutman} applies, and the conclusion follows.

  Now we show that $r(L) > 0$.  
To this end, we use Gelfand’s formula (see, e.g., \cite[p.~79]{deimling})
\[
    r(L) = \lim_{k \to \infty} \left| L^k \right|_{\text{op}}^{1/k},
\]
where $|\cdot|_{\text{op}}$ denotes the operator norm, i.e., \begin{equation*}
    |L|_{\text{op}}=\sup_{u\neq 0}\frac{|Lu|_\infty}{|u|_\infty}.
\end{equation*}  
Simple computations yields that 
\[
    (L\theta)(t) = \int_0^1 k(t,s)\theta(s)\,ds 
    \geq \theta(t) \int_0^1 k(q,s)\theta(s)\,ds 
    = \theta(t)\,(L\theta)(q),
\]
for all $q\in [0,1]$.
Hence,
\[
    (L\theta)(t) \geq \theta(t)\,|L\theta|_\infty \quad \text{for all } t \in [0,1].
\]
Thus, for any $k \in \mathbb{N}$, one has
\[
    (L^k\theta)(t) \geq \theta(t)\,|L\theta|_\infty^k.
\]
Taking the supremum norm, we obtain
\[
    |L^k\theta|_\infty \geq |\theta|_\infty\,|L\theta|_\infty^k.
\]
Consequently, since
\[
    |L^k|_{\text{op}} \geq \frac{|L^k\theta|_\infty}{|\theta|_\infty} 
    \geq |L\theta|_\infty^k,
\]
it follows that 
\[
    r(L) \geq |L\theta|_\infty > 0,
\]
where the latter inequality follows by condition (H1).
\end{proof}
\begin{remark}
Other conditions than {(H1)} can ensure that the conclusion of Lemma $\text{\ref{lema existena valori proprii hammerstein}}$ remains valid,  for instance,  $\min_{t\in [0,1]}k(t,t)>0$ (see \cite[Lemma 1]{lou1997}).
\end{remark}
By Lemma~\ref{lema existena valori proprii hammerstein}, the operator~$L$ has a positive eigenvalue $\lambda_1 = r(L) > 0$ with the corresponding eigenfunction $\varphi_1 \in K$, hence condition~(h1) is fulfilled. Moreover, since the function~$f$ is continuous and nondecreasing, the  Nemytskii operator~$F$ is continuous and increasing, therefore, condition~(h2) is also satisfied.

To ensure that {(h3)} and {(h4)} are valid as well, the following asymptotic conditions on $f$ are required:
\begin{description}
    \item[(H2)] One has, \begin{equation*}
        \lim_{t\searrow 0} \frac{f(t)}{t}>\frac{1}{\lambda_1} \quad \text{ and }\quad \lim_{t\to \infty} \frac{f(t)}{t}<\frac{1}{|L1|_\infty},
    \end{equation*} 
    where \begin{equation*}
        (L1)(t)=\int_0^1 k(t,s)ds.
    \end{equation*}
\end{description}
Note that, by (H2), there exists $0<r<R$  such that \begin{equation}\label{f(t)<=lambda_1t}
    f(t)\geq \frac{t}{\lambda_1} \quad \text{ for all}\,\, t\in [0,\lambda_1r],
\end{equation}
and \begin{equation}\label{f(t)<t supra L1 infinit}
    f(t)\leq \frac{t}{|L1|_\infty} \quad \text{ for all}\,\, t\geq |L1|_\infty R.
\end{equation}
Now, the following result holds. 
\begin{theorem}
    Under conditions (H1) and (H2), there exists a positive solution $u^\ast\in K$ for the problem \eqref{ec integrala Hammerstein}, and moreover, \begin{equation*}
         \frac{r}{|\varphi_1|_\infty}(L\varphi_1)(t)\leq u^\ast (t) \leq (LR)(t) \quad \text{ for all}\,\, t\in [0,1].
    \end{equation*} 
\end{theorem}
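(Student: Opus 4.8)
The plan is to deduce the result from Theorem~\ref{teorema existenta solutie} by checking that its hypotheses (h1)--(h5), together with the complete continuity of $L$, all follow from (H1) and (H2). The abstract framework is instantiated with $X=C[0,1]$, $K$ the cone of nonnegative continuous functions, $|\cdot|=|\cdot|_\infty$ (semi-monotone, in fact monotone, with $\gamma=1$), $L$ the linear Hammerstein operator, $F$ the Nemytskii operator, and, for the positively homogeneous functional, $\Phi=|\cdot|_\infty$, which indeed satisfies $\Phi(\alpha u)=\alpha\Phi(u)$ and $\Phi(u)=0\Leftrightarrow u=0$. The pair $(\lambda_1,\varphi_1)$ is the one supplied by Lemma~\ref{lema existena valori proprii hammerstein}, so (h1) holds; the complete continuity of $L$ is the Arzelà--Ascoli argument already recalled; and (h2) holds since $f$ is nondecreasing, hence $F$ is order-preserving.

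Next I would verify (h3) with this $\Phi$ and the number $r$ from \eqref{f(t)<=lambda_1t}: if $u\in K$ and $|u|_\infty=r$, then $0\le u(s)\le r$, hence $0\le\lambda_1 u(s)\le\lambda_1 r$ for every $s$, and \eqref{f(t)<=lambda_1t} gives $f(\lambda_1 u(s))\ge\lambda_1 u(s)/\lambda_1=u(s)$, i.e.\ $F(\lambda_1 u)\ge u$, which is exactly (h3). For (h4) I take $\mu$ to be the constant function equal to $R$ and $\alpha=|L1|_\infty$; note $|L1|_\infty>0$, for otherwise $k\equiv 0$ on $[0,1]^2$, contradicting $|L\theta|_\infty>0$ from (H1). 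Then $(L\mu)(t)=R\,(L1)(t)\le R\,|L1|_\infty=\alpha\mu(t)$, and by \eqref{f(t)<t supra L1 infinit} applied at $t=|L1|_\infty R$ we get $F(\alpha\mu)(s)=f(|L1|_\infty R)\le R=\mu(s)$, so $F(\alpha\mu)\le\mu$ and (h4) holds. With these choices the definitions \eqref{definire sub si supra solutii} become $\underline{u}=\frac{r}{|\varphi_1|_\infty}L\varphi_1$ and $\overline{u}=L\mu=R\,(L1)$, precisely the functions in the statement.

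For (h5) I would invoke Remark~\ref{remarca verificare h4}: it suffices that $\frac{r}{|\varphi_1|_\infty}\varphi_1\le\mu$, i.e.\ $\frac{r}{|\varphi_1|_\infty}\varphi_1(t)\le R$ for all $t$; since $\varphi_1(t)\le|\varphi_1|_\infty$ we have $\frac{r}{|\varphi_1|_\infty}\varphi_1(t)\le r<R$, the inequality $r<R$ being guaranteed by (H2). Having checked (h1)--(h5) and the complete continuity of $L$, Theorem~\ref{teorema existenta solutie} produces $u^\ast\in[\underline{u},\overline{u}]$ with $u^\ast=N(u^\ast)=LF(u^\ast)$, i.e.\ $u^\ast$ solves \eqref{ec integrala Hammerstein}, and $\underline{u}\le u^\ast\le\overline{u}$ is exactly the claimed localization $\frac{r}{|\varphi_1|_\infty}(L\varphi_1)(t)\le u^\ast(t)\le(LR)(t)$. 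Positivity is immediate: $\underline{u}=\frac{r}{|\varphi_1|_\infty}L\varphi_1=\frac{r\lambda_1}{|\varphi_1|_\infty}\varphi_1$ is a strictly positive multiple of $\varphi_1\in K\setminus\{0\}$, so $u^\ast\ge\underline{u}\ge 0$ with $u^\ast\not\equiv 0$.

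The only step that is not routine bookkeeping is the correct choice of the abstract data $\Phi$, $\mu$ and $\alpha$ so that (h3) and (h4) reduce to the pointwise estimates \eqref{f(t)<=lambda_1t} and \eqref{f(t)<t supra L1 infinit}; once one realizes that $\Phi=|\cdot|_\infty$ is the right functional and that the constant function $R$ with $\alpha=|L1|_\infty$ verifies (h4), everything else is a direct computation. A small point worth making explicit in the write-up is that $|L1|_\infty>0$, both so that $\alpha$ is an admissible positive constant and so that the ratio $1/|L1|_\infty$ in (H2) is meaningful.
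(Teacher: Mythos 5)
Your proposal is correct and follows essentially the same route as the paper: the same choices $\Phi=|\cdot|_\infty$, $\mu\equiv R$, $\alpha=|L1|_\infty$, verification of (h1)--(h5) and an appeal to Theorem~\ref{teorema existenta solutie}, with (h5) checked via Remark~\ref{remarca verificare h4}. Your write-up is in fact slightly cleaner: you state the (h3) inequality in the correct direction $f(\lambda_1 u(t))\ge u(t)$ (the paper's display has a typographical $\le$), and you make explicit the harmless but worthwhile observations that $|L1|_\infty>0$ and that positivity of $u^\ast$ follows from $u^\ast\ge\underline{u}=\frac{r\lambda_1}{|\varphi_1|_\infty}\varphi_1$.
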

\begin{proof}
   We verify that all the assumptions of Theorem~\ref{teorema existenta solutie} are satisfied.  Conditions (h1) and (h2) are valid, as explained above.

    \textit{Check of condition (h3)}. Letting $\Phi(u)=|u|_\infty$, we see that for any $u\in K$ with $\Phi(u)=r$, one has $\lambda_1u(t)\leq \lambda_1r$ ($t\in [0,1]$). Thus, using \eqref{f(t)<=lambda_1t}, we deduce that   \begin{equation*}
        f(\lambda_1u(t))\leq \frac{\lambda_1u(t)}{\lambda_1}=u(t) \quad \text{ for all }\,\,t\in [0,1],
    \end{equation*}
    so condition (h2) is verified.

     \textit{Check of condition (h4)}. Let $\mu\equiv R$. One has,  \begin{equation*}
         (L\mu)(t)=(LR)(t)=R(L_1)(t)\leq \alpha R\quad (t\in [0,1]),     \end{equation*}
         where $\alpha=|L1|_\infty$, and 
         \begin{equation*}
             F(\alpha\mu)=f(R|L1|_\infty)\leq \frac{R|L1|_\infty}{|L1|_\infty}=\mu,
         \end{equation*}
         whence condition (h4) holds.

           \textit{Check of condition (h5)}. Based on Remark \ref{remarca verificare h4}, condition (h5) is satisfied if 
           \begin{equation*}
               \frac{r}{|\varphi|_\infty}\varphi_1(t)\leq \mu=R,
           \end{equation*}
          which is clearly true since $r \le R$.

          Therefore, Theorem \ref{teorema existenta solutie} applies and gives the conclusion.
\end{proof}

\subsection{Positive solutions of  $p$-Laplace equations}
In  this subsection, inspired by \cite{precup_harnack}, we apply Theorem \ref{teorema principala a doua sectiune} for the $p$-Laplace problem \begin{equation}\label{p laplace problem}
    \begin{cases}
        -\Delta_pu=f(u) \quad \text{in }\,\, \Omega\\
        u=0 \quad \text{on }\,\, \partial\Omega\\
        u>0,
    \end{cases}
\end{equation}
where $p>1$, $\Omega$ is a smooth domain in $\mathbb{R}^n$ and $f\colon \mathbb{R}_+\to \mathbb{R}_+$ is a nondecreasing continuous function. Following~\cite[Lemma~1.1]{az} (see also \cite{djm}), for each \( h \in L^{\infty}(\Omega) \), there exists a unique (weak) solution \( S(h) \in C_0^1(\overline{\Omega}) \) to the problem  
\[
   \begin{cases}
        -\Delta_p u = h \quad \text{ in } \Omega,\\[4pt]
        u \in W^{1,p}_0(\Omega).
   \end{cases}
\]
Moreover, the operator \( S \colon L^{\infty}(\Omega) \to C_0^1(\overline{\Omega}) \) is completely continuous, order preserving (increasing), and invariant with respect to the cone of positive functions, i.e., \( S(h) \ge 0 \) whenever \( h \ge 0 \). Thus, letting \( X = L^{\infty}(\Omega) \) be endowed with the supremum norm $|\cdot|_\infty$ (hence increasing), and denoting by \( K \) the cone of positive functions, the problem~\eqref{p laplace problem} allows for the fixed point formulation  
\begin{equation}\label{ecuatie p laplace}
     u = N (u), \quad u \in K,
\end{equation}
where \( N = S F \), and \( F \) is the Nemytskii operator that associates to each function \( u\in L^{\infty}(\Omega) \) the function  
\[
    F(u)(x) = f(u(x)), \quad x \in \Omega.
\]
Note that, since $f$ is continuous and nonnegative, the operator $F$ is continuous and $F(K)\subset K$. 

We immediately observe that relation~\eqref{inegalitate superioara psi} holds with \( \psi \equiv 1 \). Moreover, given that the function \( f \) is nondecreasing, the operator \( T \) is increasing, and because \( S \) is order-preserving, it follows that \( N \) is also increasing.

By Theorem \ref{trudinger}, for some fixed compact set $D\subset \Omega$, there exists $M>0$ such that for every $p$-superharmonic function $u\in K$, one has\begin{equation*}
    \inf_{D}u\geq M\int_D u(x)dx.
\end{equation*}
Let us consider the seminorm \( \|\cdot\| \) be given by  
\[
    \|u\| = M \int_D u(x)\,dx, \quad u \in L^{\infty}(\Omega).
\]
Then, since \( S(h) \) is nonnegative and \( p \)-superharmonic for each \( h \in K \), condition  (a1) (inequality ~\eqref{inegalitate Harnack}) holds with  
\[
    \chi(x) =
    \begin{cases}
        1, & x \in D,\\[4pt]
        0, & \text{otherwise}.
    \end{cases}
\]
Note for any $q>0$, one has (see \cite{precup_harnack})\begin{equation*}
    Sq=q^{\frac{1}{p-1}}S1.
\end{equation*} 
Thus,  it follows that \begin{equation*}
 \|N(q\chi)\| =  \|SF(q\chi)\|=f(q)^{\frac{1}{p-1}}\|S1\|
\end{equation*}
and \begin{equation*}
    |N(q\psi)|_\infty=|N(q)|_\infty\leq f(q)^\frac{1}{p-1}|S1|_\infty.
\end{equation*}
If \begin{description}
     \item[(A1)] There exists $0<r\leq R$ such that \begin{equation*}
    f(r)\geq \frac{r^{p-1}}{\|S1\|^{p-1}},
\end{equation*}
and \begin{equation*}
    f(R)\leq \frac{r^{p-1}}{|S1|_\infty^{p-1}},
\end{equation*}
 \end{description}
then conditions (a2) and (a3) are verified. In addition, since $r\leq R$ then relation \eqref{u lowerbar < u upperbar}  is also valid, where    \begin{equation}\label{underline u si overline u aplicatie p laplace}
       \underline{u}:=r\chi \quad \text{ and }\quad \overline{u}=R.
   \end{equation} 
We easily see that, since \( F \) is continuous, the operator \( N \) is also continuous. Moreover, as \( F \) is increasing, the set \( F([\underline{u}, \overline{u}]) \) is bounded. Hence, the complete continuity of \( S \) implies that the set \( N([\underline{u}, \overline{u}]) \) is relatively compact.

   Therefore, all the requirements of Theorem \ref{teorema principala a doua sectiune} are satisfied, so the following result holds. 
\begin{theorem}
   Assume that condition (A1) is satisfied. Then,
$\underline{u}$ and $\overline{u}$ given in \eqref{underline u si overline u aplicatie p laplace}
   represent a lower and upper solution, respectively, for the equation \eqref{ecuatie p laplace}. Moreover, 
   there exists $u^\ast\in K$ such that it is a solution for the problem \eqref{p laplace problem} and \begin{equation*}
        r\chi 
\leq u^\ast\leq
 R.
    \end{equation*}
\end{theorem}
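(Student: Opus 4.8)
The plan is to verify, one by one, that all hypotheses of Theorem~\ref{teorema principala a doua sectiune} are met in the concrete setting of the $p$-Laplace problem~\eqref{ecuatie p laplace}, and then simply invoke that theorem. The groundwork has already been laid in the discussion preceding the statement: the space $X = L^\infty(\Omega)$ with the supremum norm, the cone $K$ of positive functions, the increasing seminorm $\|u\| = M\int_D u$, the element $\psi \equiv 1$ satisfying~\eqref{inegalitate superioara psi}, and the element $\chi = \mathbf{1}_D$ for which the abstract Harnack inequality~\eqref{inegalitate Harnack}, i.e.\ condition (a1), follows from Theorem~\ref{trudinger} applied to the nonnegative $p$-superharmonic functions in the range of $S$. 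So the first step in the write-up is just to recall these identifications and the fact that $N = SF$ is increasing (since $f$ is nondecreasing and $S$ is order-preserving).

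The core computational step is to deduce conditions (a2) and (a3) from hypothesis (A1). Here I would use the scaling identity $S(q) = q^{1/(p-1)} S(1)$ for constants $q > 0$ (valid by homogeneity of the $p$-Laplacian), together with $F(q\chi) = f(q)\chi$ and $F(q\psi) = f(q)$ pointwise on the relevant sets. This gives $\|N(r\chi)\| = \|SF(r\chi)\| = f(r)^{1/(p-1)}\|S1\|$, and the first inequality in (A1), $f(r) \ge r^{p-1}/\|S1\|^{p-1}$, yields $\|N(r\chi)\| \ge r$, which is exactly (a2). Likewise $|N(R\psi)|_\infty = |N(R)|_\infty = f(R)^{1/(p-1)}|S1|_\infty$, and the second inequality in (A1) gives $|N(R\psi)|_\infty \le r \le R$, which is (a3). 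The condition $\underline{u} \le \overline{u}$ from~\eqref{u lowerbar < u upperbar} is immediate: $r\chi \le r \le R$ since $\chi \le 1$ pointwise and $r \le R$.

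Next I would confirm the compactness hypothesis of Theorem~\ref{teorema principala a doua sectiune}: the operator $N = SF$ maps the order interval $[\underline{u}, \overline{u}] = [r\chi, R]$ into a relatively compact set. This follows because $F$ is continuous and increasing, hence $F([r\chi, R])$ is bounded in $L^\infty(\Omega)$ (indeed bounded by $f(R)$), and $S \colon L^\infty(\Omega) \to C_0^1(\overline\Omega)$ is completely continuous by \cite[Lemma~1.1]{az}; composing a bounded set with a completely continuous operator produces a relatively compact set. With every hypothesis (a1)--(a3), the comparability~\eqref{u lowerbar < u upperbar}, and the compactness condition verified, Theorem~\ref{teorema principala a doua sectiune} delivers directly that $\underline{u} = r\chi$ and $\overline{u} = R$ are a lower and an upper solution for~\eqref{ecuatie p laplace} and that there exists a fixed point $u^\ast \in K$ of $N$ with $r\chi \le u^\ast \le R$. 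Finally, since the fixed point formulation~\eqref{ecuatie p laplace} was set up to be equivalent to the boundary value problem~\eqref{p laplace problem} (via $N = SF$ and the fact that $S(h)$ is the unique weak solution of $-\Delta_p u = h$ in $W_0^{1,p}(\Omega)$), I would note that $u^\ast$ is the desired positive solution of~\eqref{p laplace problem}, with positivity on $D$ guaranteed by $u^\ast \ge r\chi$.

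I do not anticipate a genuine obstacle: this proof is essentially a verification checklist. The only point requiring a little care is the justification of the scaling identity $S(q) = q^{1/(p-1)}S(1)$ and, correspondingly, that $F$ acts by the stated pointwise formulas on the constant functions and on $q\chi$ — but these are routine consequences of the homogeneity of $-\Delta_p$ and the definition of the Nemytskii operator, and the paper already cites \cite{precup_harnack} for the scaling identity, so it may simply be quoted.
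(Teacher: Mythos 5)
Your proposal is correct and takes essentially the same route as the paper, whose ``proof'' is precisely the verification checklist given in the discussion before the theorem: (a1) via the Harnack inequality of Theorem~\ref{trudinger} with $\chi=\mathbf{1}_D$ and the seminorm $\|u\|=M\int_D u$, (a2)--(a3) via the scaling identity $Sq=q^{1/(p-1)}S1$ combined with (A1), comparability $r\chi\leq R$ from $r\leq R$, relative compactness of $N([r\chi,R])$ from the boundedness of $F([r\chi,R])$ and the complete continuity of $S$, and then an application of Theorem~\ref{teorema principala a doua sectiune}. The one small imprecision in your write-up --- treating $F(r\chi)$ as if it equaled the constant $f(r)$ so that $\|SF(r\chi)\|=f(r)^{1/(p-1)}\|S1\|$, whereas $F(r\chi)$ equals $f(r)$ only on $D$ --- is exactly the same simplification made in the paper, so it does not set your argument apart from the published one.
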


\bmhead{Acknowledgements}
The author would like to thank Prof. Radu Precup for insightful discussions on the subject and  verification of the paper.

\section*{Declarations}

\subsection*{Funding}
The authors declare that no funds, grants, or other support were received during the preparation of this manuscript.
\subsection*{Conflict of interest}
The author have no relevant financial or non-financial interests to disclose.

\subsection*{Data availability }
Not applicable

\subsection*{Ethics approval and consent to participate}
Not applicable
\subsection*{Materials availability}
Not applicable
\subsection*{Code availability}
Not applicable





\end{document}